\newtheorem{theorem}{Theorem}
\newtheorem*{theorem*}{Theorem}
\newtheorem{lemma}{Lemma}
\newtheorem{corollary}{Corollary}
\title{Topologically slice $(1,1)$-knots which are not smoothly slice}
\author{Zipei Nie}
\begin{document}
\maketitle

\begin{abstract}
We prove that there are infinitely many $(1,1)$-knots which are topologically slice, but not smoothly slice, which was a conjecture proposed by Béla András Rácz.
\end{abstract} 
\section{Introduction}
As defined in \cite{doll}, a knot $K$ is called a $(g,b)$-knot in $S^3$, if there is a Heegaard splitting $S^3=U \cup V$ of genus $g$, such that each of $K\cap U$ and $K \cap V$ consists of $b$ trivial arcs. By definition, the $(0,b)$-knots are the $b$-bridge knots. So the $(1,1)$-knots can be seen as $1$-bridge knots on the standard torus. All $2$-bridge knots and torus knots are $(1,1)$-knots.

The smooth (resp., topological) slice genus of a knot $K$ is the minimal genus of a connected, orientable  $2$-manifold smoothly (resp., locally flatly) embedded in the $4$-ball $D^4$ whose boundary is the knot $K$. A knot $K$ is called smoothly (resp.,topologically) slice, if its smooth (resp., topological) slice genus is zero. While it is known that \cite{gom} there exist infinitely many topologically slice knots which are not smoothly slice, we prove that it is still true if we restrict the knots to be $(1,1)$-knots, which was a conjecture proposed \cite{rac} by Rácz. In other words, our main theorem is the following. 

\begin{theorem*}
There are infinitely many $(1,1)$-knots which are topologically slice, but not smoothly slice.
\end{theorem*}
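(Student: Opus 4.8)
The plan is to combine two classical inputs. On the topological side, Freedman's disk embedding theorem gives that any knot with trivial Alexander polynomial bounds a locally flat embedded disk in $D^4$, hence is topologically slice. On the smooth side, the Ozsv\'ath--Szab\'o concordance invariant $\tau$ satisfies $|\tau(K)| \le g_4(K)$, so $\tau(K) \ne 0$ forces $K$ not to be smoothly slice. It therefore suffices to exhibit an infinite family of $(1,1)$-knots $K_n$ with $\Delta_{K_n}(t) \equiv 1$ and $\tau(K_n) \ne 0$. What makes such a search feasible --- and is presumably what underlies R\'acz's conjecture --- is that a $(1,1)$-knot is encoded by a genus-one doubly-pointed Heegaard diagram $(\mathbb{T}^2, \alpha, \beta, w, z)$, so that $\widehat{HFK}(K)$ and the filtration on $\widehat{CF}(S^3)$ computing $\tau(K)$ admit a purely combinatorial description.

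First I would fix a normal form for $(1,1)$-diagrams (the torus $\mathbb{R}^2/\mathbb{Z}^2$ with $\alpha$ a standard circle and $\beta_n$ drawn so that $|\alpha \cap \beta_n|$ grows with $n$, with the basepoints $w, z$ placed to create a clasp-type local picture) and write down an explicit one-parameter family $K_n$, chosen so that the alternating sum $\sum_{x \in \alpha \cap \beta_n} (-1)^{M(x)} t^{A(x)}$ is a single monomial. Since this sum is the graded Euler characteristic of $\widehat{HFK}(K_n)$, that computation gives $\Delta_{K_n}(t) \equiv 1$, and hence by Freedman every $K_n$ is topologically slice.

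The heart of the matter is computing $\tau(K_n)$. From the diagram I would read off the generators (the points of $\alpha \cap \beta_n$), their Alexander and Maslov gradings from the two basepoints, and the differential by the combinatorial count of disks in the torus; then pass to the filtered complex computing $\widehat{HF}(S^3) \cong \mathbb{Z}$ and identify $\tau(K_n)$ as the lowest Alexander filtration level at which a cycle generating the homology is born. The real work is to carry this out uniformly in $n$: to isolate the distinguished ``$\tau$-carrying'' generator, to show the relevant differentials are insensitive to $n$ (or vary in a controlled way), and to conclude that $\tau(K_n)$ equals a fixed nonzero constant --- or tends to infinity --- for all $n$. Either way $\tau(K_n) \ne 0$, so $g_4(K_n) \ge 1$ and no $K_n$ is smoothly slice. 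To see that the $K_n$ represent infinitely many distinct knots I would extract an unbounded invariant from the same computation: most naturally the Seifert genus, which equals the top Alexander grading supporting $\widehat{HFK}(K_n)$ and which the computation should reveal growing with $n$ (the triviality of $\Delta_{K_n}$ being forced by internal cancellation in $\widehat{HFK}$, not by small genus), or else $\tau(K_n)$ itself if that is the quantity that grows.

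I expect the main obstacle to be precisely this uniform bookkeeping: as $\beta_n$ becomes more intricate the number of generators and of contributing disks grows, and one must organize the algebraic cancellation tightly enough to pin down simultaneously, for every $n$, that $\Delta_{K_n} \equiv 1$ and that the surviving homology class sits at the claimed filtration level. A secondary technical point is checking that the chosen diagrams satisfy the admissibility and reducedness hypotheses under which the combinatorial formula for $\tau$ of a $(1,1)$-knot is valid. If controlling $\tau$ directly turns out to be awkward, one could instead obstruct smooth sliceness through the Heegaard Floer $d$-invariant of the double branched cover $\Sigma_2(K_n)$, which is an integer homology sphere because $\det K_n = |\Delta_{K_n}(-1)| = 1$; but the $\tau$ route has the advantage of staying entirely within the combinatorics of the $(1,1)$-diagram.
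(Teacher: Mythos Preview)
Your high-level strategy matches the paper's exactly: exhibit an explicit one-parameter family of $(1,1)$-knots $K_n$, compute $\widehat{HFK}(K_n)$ combinatorially from the doubly-pointed genus-one Heegaard diagram, read off $\Delta_{K_n}\equiv 1$ from the graded Euler characteristic (hence topological sliceness via Freedman), and obstruct smooth sliceness via $\tau$. The tactical difference lies in how the paper handles precisely the step you flag as the main obstacle, the uniform-in-$n$ control of $\tau(K_n)$. The paper does \emph{not} compute $\tau(K_n)$ from the filtered complex for general $n$. Instead it computes $\tau(K_0)=1$ explicitly for the single base case, then observes that $K_n$ is turned into $K_0$ by changing $n$ positive crossings to negative, so the Ozsv\'ath--Szab\'o crossing-change inequality gives $\tau(K_n)\ge\tau(K_0)=1$; in the other direction it exhibits a genus-one cobordism from $K_n$ to the unknot by resolving two crossings, giving $g_4(K_n)\le 1$, whence $\tau(K_n)=g_4(K_n)=1$ for all $n$. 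This completely sidesteps the uniform bookkeeping you anticipated. One further point: for the paper's family both the Seifert genus (equal to $2$, as the top Alexander grading in $\widehat{HFK}$) and $\tau$ (equal to $1$) are constant in $n$, so neither of the distinguishing invariants you propose would separate these knots; the paper instead distinguishes the $K_n$ by the total rank of $\widehat{HFK}(K_n)$, which is $64n+31$.
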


To prove the main theorem, we will construct a one-parameter family of $(1,1)$-knots $K_n$ ($n=0,1,\ldots$) as a valid example.  

For a $(1,1)$-knot $K$, the intersection of $K$ and the standard torus consists two basepoints $w$ and $z$. The information we need to determines the knot is the trivial arcs connecting $w$ and $z$ inside and outside the standard torus. However, knowing how to embed the two trivial arcs on the standard torus is more than enough. The meridian disks of the $1$-handles inside and outside the standard torus which does not intersect $K$ determines the knot $K$. The boundary of the meridian disks are called the $\alpha$ curve and the $\beta$ curve. A torus with a pair of basepoints $w$ and $z$ and a pair of curves $\alpha$ and $\beta$ on it, is called a $(1,1)$-diagram, if $\alpha$ and $\beta$ are embedded closed curves on the complement of $\{w,z\}$ and have the algebraic intersection number $\pm 1$. We can use four parameters to specify a $(1,1)$-diagram or a $(1,1)$-knot, which is called the Rasmussen's notation $K(p,q,r,s)$, as in \cite{ras}. 

In Section \ref{2}, we construct the family of knots and prove they are indeed $(1,1)$-knots by deriving the $(1,1)$-diagrams for them.

Given a $(1,1)$-knot $K=K(p,q,r,s)$, we can find \cite{goda} the boundary operator of the chain complex $CFK^\infty(S^3,K)$. Moreover, the invariant $\tau(K)$ defined in \cite{ozs} can be found because it only relies on $CFK^\infty(S^3,K)$. This invariant is closely related to the smooth slice genus $g_4(K)$ by the inequality $|\tau(K)|\le g_4(K)$, as demonstrated in \cite{ozs}. 

In Section \ref{3}, we compute the knot Floer homology of each $K_n$ in the above way. From that we prove these knots have trivial Conway polynomials, and therefore are topologically slice.

In Section \ref{4}, we first compute the $\tau$ invariant of $K_0$ in the demonstrated way. By certain inequalities for $\tau$ invariants in \cite{ozs}, we prove the $\tau$ invariant and the smooth slice genus of each $K_n$ are $1$, and therefore these knots are not smoothly slice.

The pictures of knots are generated by the software KnotPlot \cite{sch}.

The author would like to thank Zoltán Szabó for his help and support.

\section{The $(1,1)$-knots $K_n$}\label{2}
For each non-negative integer $n$, let $K_n$ be the knot with the following planar projection.

\begin{figure}[H]
\centering
\includegraphics[width=0.6\textwidth]{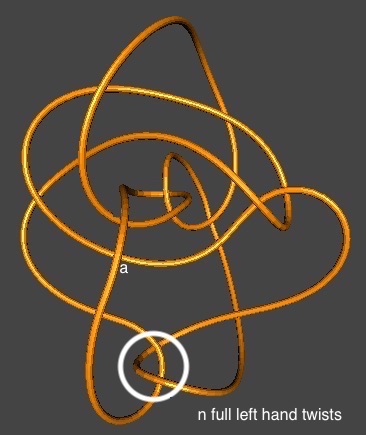}
\caption{\label{fig:k_n}Knot $K_1$ as shown. The knot $K_n$ has $n$ full left hand twists inside the white circle.}
\end{figure}

\begin{theorem}
$K_n$ is the $(1,1)$-knot $K(64n+31,24n+12,16n+6,32n+18)$ in Rasmussen's notation.
\end{theorem}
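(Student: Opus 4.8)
The plan is to bring $K_n$ into $1$-bridge position on a standardly embedded torus $T \subset S^3$ and then read Rasmussen's four parameters off the resulting doubly-pointed Heegaard diagram.

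\emph{Step 1: Isotopy into bridge position.} Starting from the planar projection in Figure~\ref{fig:k_n}, I would isotope $K_n$ so that, for the genus-one splitting $S^3 = U \cup_T V$, the arc $K_n \cap U$ is a short, boundary-parallel arc while all of the knotting, including the $n$ full left-hand twists inside the white circle, is pushed into the arc $K_n \cap V$, which is arranged to be boundary-parallel in $V$ as well. Concretely this means sliding the strands of the diagram onto $T$ and absorbing the twist box; cutting $T$ open along a meridian and a longitude turns this into an explicit picture on a square. This step is where the twisting must be handled with care: I expect each additional full left-hand twist to act on the square picture by a fixed modification — morally, composition with a power of a Dehn twist along a curve on $T$ — so that the whole family can be drawn uniformly in $n$.

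\emph{Step 2: Extracting and normalizing the Heegaard diagram.} With $K_n$ in $1$-bridge position, the basepoints $w$ and $z$ are the two points of $K_n \cap T$; the curve $\alpha$ bounds a meridian disk of $U$ disjoint from $K_n$ (which exists since $K_n \cap U$ is boundary-parallel), and similarly $\beta$ bounds a meridian disk of $V$ disjoint from $K_n$. Keeping $K_n \cap U$ short keeps $\alpha$ standard, while $\beta$ is forced into a complicated position by $K_n \cap V$; drawing $\alpha \cup \beta$ on the square from Step~1 produces the diagram. I would then normalize it to Rasmussen's form — straighten $\alpha$, comb $\beta$ into its two nested families of arcs on the two sides of $\alpha$, locate $w$ and $z$ — and read off $p = |\alpha \cap \beta|$, the slope parameter $q$ from the permutation of the intersection points induced along $\alpha$ by traversing $\beta$, and $r,s$ from the two regions of $T \setminus (\alpha \cup \beta)$ that contain the basepoints. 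Matching these against $(64n+31,\,24n+12,\,16n+6,\,32n+18)$ is then bookkeeping, which I would carry out for general $n$ directly or, failing that, verify for $n=0,1$ and then argue inductively using the uniform effect of an added twist established in Step~1.

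\emph{Consistency checks and the main difficulty.} Along the way one should check that $\alpha \cdot \beta = \pm 1$ algebraically — i.e., that the $\tfrac{p-1}{2}$ excess intersection points occur in canceling pairs — so that the diagram really is a $(1,1)$-diagram of a knot in $S^3$ rather than a Heegaard diagram of some lens space; and, as an independent sanity check, one can reverse the construction, rebuilding the two bridge arcs from $\alpha,\beta,w,z$ for the diagram $K(64n+31,24n+12,16n+6,32n+18)$ and confirming it simplifies back to Figure~\ref{fig:k_n}. The hard part is squarely Step~1: producing an honest ambient isotopy from the fairly intricate planar picture to $1$-bridge position while keeping exact control of the $n$ twists and of which side of $\alpha$ each strand of $\beta$ lands on, since an off-by-one slip there would throw off all four linear formulas simultaneously.
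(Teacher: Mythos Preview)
Your plan is correct and is essentially the same strategy the paper uses: place $K_n$ in $1$-bridge position on a standard torus, extract the doubly-pointed genus-one Heegaard diagram $(\Sigma,\alpha,\beta,w,z)$, normalize, and read off Rasmussen's four parameters. The only notable difference is one of execution rather than idea: where you speak of isotoping strands onto $T$ and absorbing the twist box via a Dehn-twist action, the paper carries this out by the equivalent device of dragging the basepoints $w$ and $z$ along longitudes and meridians of the torus (and $n$ times around each other to account for the $n$ full twists), recording the effect on $\alpha$ and $\beta$ through an explicit sequence of pictures until the diagram is in Rasmussen's standard form.
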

\begin{proof}
A $(1,1)$-diagram of $K_n$ is essentially a doubly-pointed Heegaard diagram $(\Sigma,\alpha,\beta, w,z)$, where $(\Sigma,\alpha,\beta)$ is a genus one Heegaard splitting of $S^3$ and $w,z$ are base points on the torus $\Sigma$. The following figure gives the torus $\Sigma$ as the blue torus and the base points $w,z$ as labeled. The $\alpha$ circle is immediately known. The only thing left to compute is the $\beta$ circle. This can be found via a sequence of isotopies of the space.

\begin{figure}[H]
\centering
\includegraphics[width=0.6\textwidth]{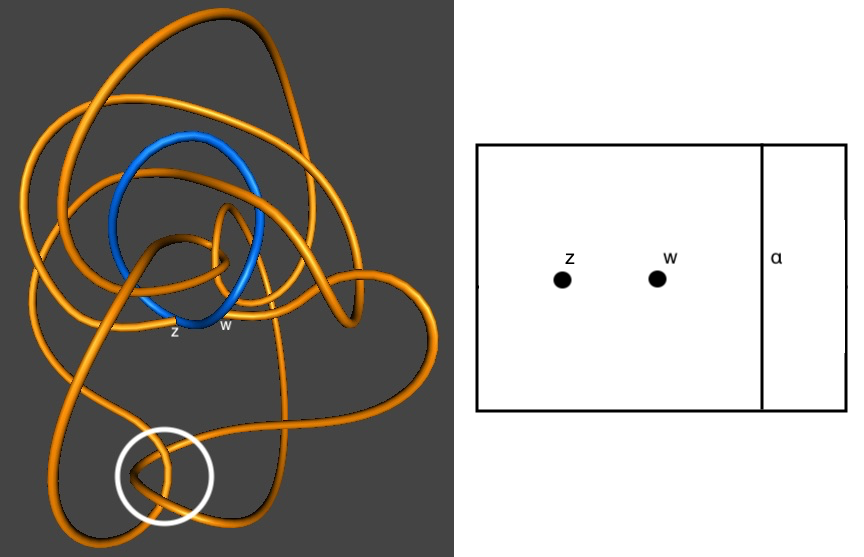}
\caption{First Step.}\label{fig:step1}
\end{figure}

Then, we move $z$ along a longitude of the blue torus clockwise and move $z$ around $w$ clockwise. Now we get the following figure.

\begin{figure}[H]
\centering
\includegraphics[width=0.57\textwidth]{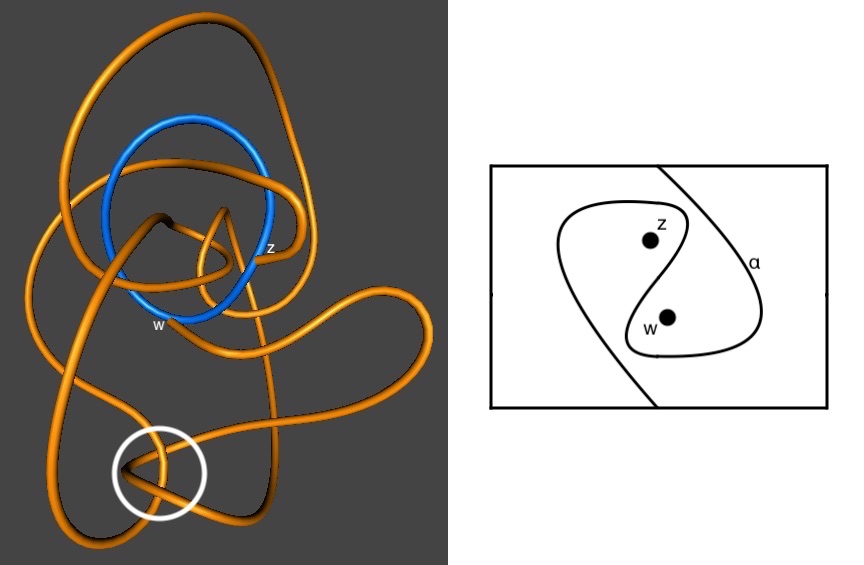}
\caption{Second Step.}\label{fig:step2}
\end{figure}

Then, we move $z$ along a longitude of the blue torus counterclockwise, and then move $z$ along a meridian of the blue torus into the paper. Now we get the following figure.

\begin{figure}[H]
\centering
\includegraphics[width=0.55\textwidth]{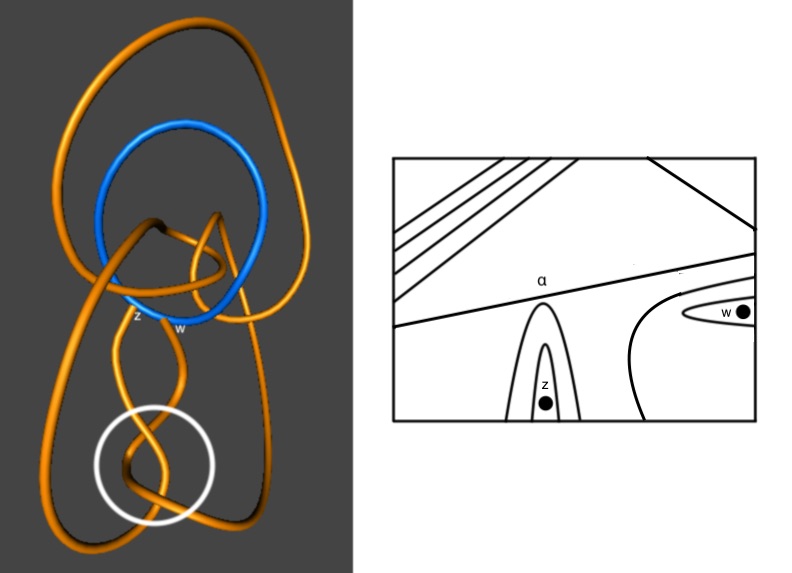}
\caption{Third Step.}\label{fig:step3}
\end{figure}

Now we move $w$ around $z$ clockwise $n$ times, and we get the following figure. 

\begin{figure}[H]
\centering
\includegraphics[width=0.25\textwidth]{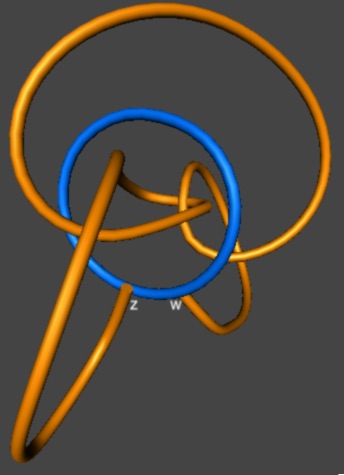}
\caption{Fourth Step.}\label{fig:step4}
\end{figure}

Then, we move $w$ along a longitude of the blue torus clockwise and and then move $w$ along a meridian of the blue torus out of the paper. Then we get the following figure. Here an arc with a red number $s$ on it represents a family of $s$ parallel curves.

\begin{figure}[H]
\centering
\includegraphics[width=0.55\textwidth]{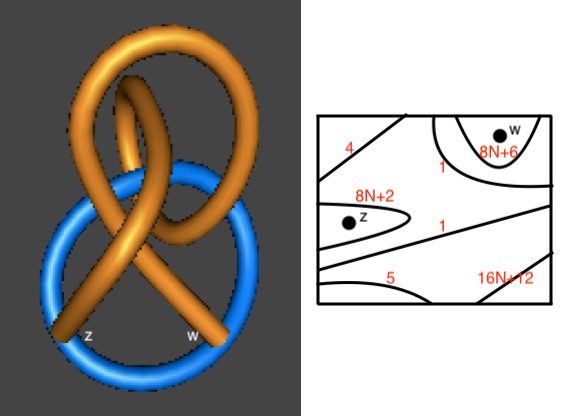}
\caption{Fifth Step.}\label{fig:step5}
\end{figure}

Before we proceed to find the $\beta$ curve, we simplify the $\alpha$ curve without moving the basepoints $w$ and $z$ and get the following diagram. 

\begin{figure}[H]
\centering
\includegraphics[width=0.55\textwidth]{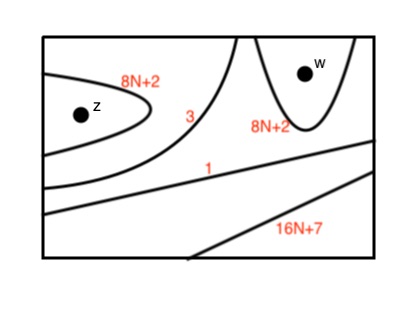}
\caption{Sixth Step.}\label{fig:step6}
\end{figure}

Finally, it is straightforward to get the triviality of the outside arc, and a $\beta$ curve can be constructed as follows (the blue curve). 

\begin{figure}[H]
\centering
\includegraphics[width=0.55\textwidth]{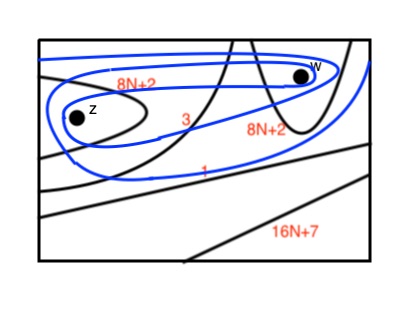}
\caption{Seventh Step.}\label{fig:step7}
\end{figure}

By straightening the $\beta$ curve, we obtain the following $(1,1)$-diagram.

\begin{figure}[H]
\centering
\includegraphics[width=0.55\textwidth]{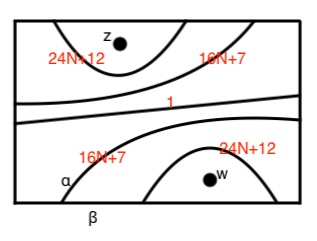}
\caption{Eighth Step.}\label{fig:step8}
\end{figure}

Equivalently, we have the following $(1,1)$-diagram. In Rasmussen's notation, $K_n$ is the $(1,1)$-knot $K(64n+31,24n+12,16n+6,32n+18)$.

\begin{figure}[H]
\centering
\includegraphics[width=0.55\textwidth]{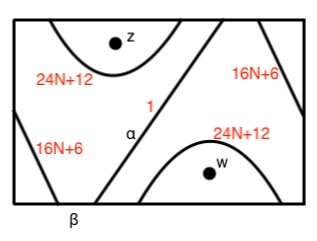}
\caption{Ninth Step.}\label{fig:step9}
\end{figure}

\end{proof}

\section{The knot Floer homology of $K_n$}\label{3}

\begin{theorem}
The Poincare polynomial of the knot Floer homology of $K_n$ is \begin{align*}
    \text{HF}_{K_n}(q,t)=&\sum_{m,a\in \mathbf{Z}} q^m t^a \text{rank}_{\mathbf{Z}}( \widehat{\text{HFK}}_m(S^3,K_n,a))\\=&-q^{-1}+(2n+1)q^{-3}t^{-2}(1+q)(1+q t)^4.
\end{align*}
\end{theorem}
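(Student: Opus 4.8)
The plan is to compute $\widehat{\mathrm{HFK}}(S^3, K_n)$ directly from the $(1,1)$-diagram $K(64n+31, 24n+12, 16n+6, 32n+18)$ established in the previous theorem, following the combinatorial recipe of Goda--Matsuda--Morifuji and Rasmussen (cited as \cite{goda}, \cite{ras}). First I would enumerate the generators: these are the intersection points $\alpha \cap \beta$ on the torus, and since the algebraic intersection number is $\pm 1$ while the geometric intersection number is governed by the parameter $p = 64n+31$, there are exactly $64n+31$ generators. I would set up coordinates on $\Sigma = \mathbf{R}^2 / \mathbf{Z}^2$ with $\alpha$ a horizontal circle and $\beta$ the standard $(p,q)$-type curve twisted according to Rasmussen's parameters, place $w$ and $z$ in the correct complementary regions, and index the generators $x_0, \ldots, x_{p-1}$ along $\alpha$.

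Next I would compute the two gradings. The Alexander (filtration) grading $a(x_i)$ is obtained by the standard difference formula: $a(x_i) - a(x_j) = n_z(\phi) - n_w(\phi)$ for a disk $\phi$ connecting $x_i$ to $x_j$, which on the torus reduces to counting how many times the straight-line path along $\beta$ (or $\alpha$) from $x_j$ to $x_i$ crosses $z$ versus $w$; this is a linear-algebra bookkeeping exercise in the four parameters. The Maslov grading $q$-exponent $m(x_i)$ is computed similarly via $m(x_i) - m(x_j) = \mu(\phi) - 2 n_w(\phi)$, using the Lipshitz/Rasmussen formula for the Maslov index of a bigon or rectangle on the torus. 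The claimed answer $-q^{-1} + (2n+1) q^{-3} t^{-2}(1+q)(1+qt)^4$ has total rank $1 + (2n+1)\cdot 2 \cdot 16 = 32(2n+1) + 1 = 64n + 33$; since we expect $64n+31$ generators, two of them must cancel in homology, so I would need to identify a single differential (a bigon or an innermost rectangle disjoint from $w$ and $z$ in the $a$-grading but not — wait, a differential must drop Alexander grading by a controlled amount) — more precisely, $\widehat{\mathrm{HFK}}$ is the homology of the associated graded, so I would compute the differential on the generators at each fixed Alexander grading and check that exactly one pair cancels, leaving $64n+31$ surviving classes with the stated bigraded ranks. Concretely, the expansion $(1+q)(1+qt)^4 = \sum_{k=0}^4 \binom{4}{k} q^k t^k + \sum_{k=0}^4 \binom{4}{k} q^{k+1} t^k$ gives multiplicities $1,4,6,4,1$ and $1,4,6,4,1$ shifted, so I would verify the generator count in each Alexander grading $a \in \{-2,-1,0,1,2\}$ matches $(2n+1)$ times the binomial pattern, up to the one cancellation producing the isolated $-q^{-1}$ term.

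The main obstacle I anticipate is the bookkeeping: organizing the $64n+31$ generators into Alexander-graded families and correctly computing the Maslov gradings, since the Maslov index formula involves a correction term (the ``$e(\phi) + \text{corners}$'' or equivalently a count of local multiplicities) that is error-prone for large twisting. A clean way to manage this is to exploit the linear dependence on $n$: the twisting region contributes $n$ copies of a repeating block, so I would compute the gradings for the ``$n=0$ core'' plus an explicit arithmetic progression for the twisted part, then confirm the total Poincaré polynomial is affine in $n$ with the stated slope $2 q^{-3} t^{-2}(1+q)(1+qt)^4$ and intercept $-q^{-1} + q^{-3}t^{-2}(1+q)(1+qt)^4$. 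As a consistency check I would verify that $\mathrm{HF}_{K_n}(q,t)$ specializes correctly: setting $t = 1$ must give the graded Euler characteristic matching $\widehat{\mathrm{HF}}(S^3) $, i.e.\ the alternating-sign count collapses to $1$, and setting $q = -1$ must recover the Alexander polynomial (which, being symmetric and here forced to be trivial, gives another strong check and ties directly into the topological sliceness argument of the next section).
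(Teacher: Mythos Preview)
Your overall plan---enumerate the $64n+31$ intersection points, compute their Alexander and Maslov gradings via Whitney disks, and organise the calculation as an $n=0$ core plus repeating blocks---is exactly what the paper does. But you have a sign/arithmetic slip that leads you to invent a step that is not there. The term $-q^{-1}$ \emph{subtracts} from the rank: the total rank of the stated polynomial is
\[
32(2n+1)-1 \;=\; 64n+31,
\]
not $64n+33$. So the number of generators already matches the rank of $\widehat{\mathrm{HFK}}$ on the nose, and there is no cancelling differential to find.

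This is not an accident but a general feature of reduced $(1,1)$-diagrams: once $\alpha$ and $\beta$ are in minimal position on the twice-punctured torus, there are no embedded bigons missing both basepoints, so the differential on the associated graded complex vanishes identically and the intersection points form a basis of $\widehat{\mathrm{HFK}}$. The paper uses this from the outset (``the crossing points $x_i$ \ldots\ represent a basis of the knot Floer homology''), and the entire proof is then a bookkeeping exercise in pinning down the bigradings, with the absolute Maslov grading fixed by identifying which generator survives after forgetting $z$. Your anticipated ``identify one cancelling pair'' step would send you hunting for something that does not exist; drop it and the rest of your outline matches the paper.
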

\begin{proof}
It is clear that the total rank of the knot Floer homology of $K_n$ is $64n+31$, where the crossing points $x_i$ ($i=1,\ldots,64n+31$) between the $\alpha$ curve and the $\beta$ curve represent a basis of the knot Floer homology. To compute the knot Floer homology, we need to find the  Alexander grading $a_i$ and the Maslov grading $m_i$ of each crossing point $x_i$ ($i=1,\ldots,64n+31$).

From each Whitney disk of unit Maslov index, we derive a equation for the Alexander and Maslov grading of the vertices of the disk. With sufficient number of such disks, we can obtain the relative Alexander and Maslov gradings.

First, there is a disk from $x_{i}$ to $x_{48n+25-i}$ for each $i=1,\ldots,24n+12$, with a single basepoint $z$ inside. And there is a disk from $x_{i}$ to $x_{80n+39-i}$ 
for each $i=16n+8,\ldots,40n+19$, with a single basepoint $w$ inside. 

Then, we concentrate on the disk from $x_{24n+12}$ to $x_{24n+13}$, and try to extend it into larger disks while keeping the basepoint on it unchanged. Sequentially, we obtain disks from $x_{8(n-i)+2}$ to $x_{8(n-i)+1}$ and disks from $x_{8(n+i)+9}$ to $x_{8(n+i)+10}$ for $i=0,\ldots,n-1$. Then we obtain a disk from $x_{2}$ to $x_1$. After that, we sequentially obtain disks from $x_{8(2n-i)+5}$ to $x_{8(2n-i)+4}$ and disks from $x_{8i+6}$ to $x_{8i+7}$  for $i=0,\ldots,2n$. All the disks obtained here are Whitney disk of unit Maslov index with a single basepoint $z$ inside. 

Finally, we keep extending to obtain the disk from $x_{64n+31}$ to $x_{48n+25}$. To argue that the last one we got is an embedded disk in the universal covering space, we notice that, in every previous step, the segment where the endpoints lie is moving diagonally in the universal covering space, but in the last step, it is moving vertically, so the disk has no self intersections. Therefore the last disk we obtained is a Whitney disk of unit Maslov index with two $z$'s inside. 

Similarly, we extend the disk from $x_{40n+20}$ to $x_{40n+19}$ in the same way. Then we sequentially obtain disks from $x_{8(n+i)+5}$ to $x_{8(n+i)+6}$ and disks from $x_{8(n-i)-2}$ to $x_{8(n-i)-3}$ for $i=0,\ldots,n-1$. And then we obtain a disk from $x_{16n+5}$ to $x_{16n+6}$. Then we sequentially obtain disks from $x_{8i+2}$ to $x_{8i+3}$ and disks from $x_{8(2n-i)+1}$ to $x_{8(2n-i)}$  for $i=0,\ldots,2n-1$. And then we obtain a disk from $x_{16n+2}$ to $x_{16n+3}$. All the disks obtained here are Whitney disk of unit Maslov index with a single basepoint $w$ inside. And similarly, we obtain a Whitney disk of unit Maslov index with two $w$'s inside from $x_1$ to $x_{16n+7}$ in the final step.

Now we extend the disk from $x_{24n+10}$ to $x_{24n+15}$ in the same way. Then we sequentially obtain disks from $x_{8(n-i)+4}$ to $x_{8(n-i)-1}$ and disks from $x_{8(n+i)+7}$ to $x_{8(n+i)+12}$ for $i=0,\ldots,n-1$. All the disks obtained here are Whitney disk of unit Maslov index with a single basepoint $z$ inside. 

Similarly, we extend the disk from $x_{40n+22}$ to $x_{40n+17}$ in the same way. Then we sequentially obtain disks from $x_{8(n+i)+3}$ to $x_{8(n+i)+8}$ and disks from $x_{8(n-i)}$ to $x_{8(n-i)-5}$ for $i=0,\ldots,n-1$. Then we obtain a disk from $x_{16n+3}$ to $x_{16n+8}$. All the disks obtained here are Whitney disk of unit Maslov index with a single basepoint $w$ inside. 

Now we have found sufficient Whitney disks to get the relative Alexander and Maslov gradings for the first $16n+8$ crossing points. By symmetry, we also get the relative Alexander and Maslov gradings for the last $16n+8$ crossings. Via the Whitney disks from $x_i$ to $x_{80n+39-i}$ ($i=16n+8,\ldots 40n+19$), because we have a common element $x_{16n+8}$ in the sets of crossing points, we get the relative Alexander and Maslov gradings for the first $32n+15$ crossing points. By using the symmetry and the Whitney disks for a second time, we get the relative Alexander and Maslov gradings for all the crossing points.

The following is a solution to the relative Alexander gradings.
$$(a_{8i+1},a_{8i+2},a_{8i+3},a_{8i+4},a_{8i+5},a_{8i+6},a_{8i+7},a_{8i+8})=(-1,0,1,0,1,0,-1,0)$$
for $i=0,\ldots,n-1$.
$$a_{8n+1}=-1.$$
\begin{align*}
    &(a_{8n+8i+2},a_{8n+8i+3},a_{8n+8i+4},a_{8n+8i+5},a_{8n+8i+6},a_{8n+8i+7},a_{8n+8i+8},a_{8n+8i+9})\\
    =&(0,1,0,1,2,1,2,1)
\end{align*}
for $i=0,\ldots,2n$.
\begin{align*}
    &(a_{24n+8i+10},a_{24n+8i+11},a_{24n+8i+12},a_{24n+8i+13},a_{24n+8i+14},a_{24n+8i+15},a_{24n+8i+16},a_{24n+8i+17})\\=&(0,1,0,-1,0,-1,0,1)
\end{align*}
for $i=0,\ldots, n-1$.
\begin{align*}
    &(a_{32n+10},a_{32n+11},a_{32n+12},a_{32n+13},a_{32n+14},a_{32n+15},a_{32n+16},\\
    &a_{32n+17},a_{32n+18},a_{32n+19},a_{32n+20},a_{32n+21},a_{32n+22})\\
    =&(0,1,0,-1,0,-1,0,1,0,1,0,-1,0).
\end{align*}
\begin{align*}
    &(a_{32n+8i+23},a_{32n+8i+24},a_{32n+8i+25},a_{32n+8i+26},a_{32n+8i+27},a_{32n+8i+28},a_{32n+8i+29},a_{32n+8i+30})\\=&(-1,0,1,0,1,0,-1,0)
\end{align*}
for $i=0,\ldots,n-1$.
\begin{align*}
    &(a_{40n+8i+23},a_{40n+8i+24},a_{40n+8i+25},a_{40n+8i+26},a_{40n+8i+27},a_{40n+8i+28},a_{40n+8i+29},a_{40n+8i+30})\\
    =&(-1,-2,-1,-2,-1,0,-1,0)
\end{align*}
for $i=0,\ldots,2n$.
$$a_{56n+31}=1.$$
\begin{align*}
    &(a_{56n+8i+32},a_{56n+8i+33},a_{56n+8i+34},a_{56n+8i+35},a_{56n+8i+36},a_{56n+8i+37},a_{56n+8i+38},a_{56n+8i+39})\\
    =&(0,1,0,-1,0,-1,0,1)
\end{align*}
for $i=0,\ldots,n-1$.

By symmetry, the above is also a solution to the absolute Alexander gradings.

The following is a solution to the relative Maslov gradings.
$$(m_{8i+1},m_{8i+2},m_{8i+3},m_{8i+4},m_{8i+5},m_{8i+6},m_{8i+7},m_{8i+8})=(-2,-1,0,0,1,0,-1,-1)$$
for $i=0,\ldots,n-1$.
$$m_{8n+1}=-2.$$
\begin{align*}
    &(m_{8n+8i+2},m_{8n+8i+3},m_{8n+8i+4},m_{8n+8i+5},m_{8n+8i+6},m_{8n+8i+7},m_{8n+8i+8},m_{8n+8i+9})\\
    =&(-1,0,0,1,2,1,1,0)
\end{align*}
for $i=0,\ldots,n$.
\begin{align*}
    &(m_{16n+8i+10},m_{16n+8i+11},m_{16n+8i+12},m_{16n+8i+13},m_{16n+8i+14},m_{16n+8i+15},m_{16n+8i+16},m_{16n+8i+17})\\
    =&(-1,1,0,1,2,0,1,0)
\end{align*}
for $i=0,\ldots,n-1$.
\begin{align*}
    &(m_{24n+8i+10},m_{24n+8i+11},m_{24n+8i+12},m_{24n+8i+13},m_{24n+8i+14},m_{24n+8i+15},m_{24n+8i+16},m_{24n+8i+17})\\=&(-1,1,0,-1,0,-2,-1,0)
\end{align*}
for $i=0,\ldots, n-1$.
\begin{align*}
    &(m_{32n+10},m_{32n+11},m_{32n+12},m_{32n+13},m_{32n+14},m_{32n+15},m_{32n+16},\\
    &m_{32n+17},m_{32n+18},m_{32n+19},m_{32n+20},m_{32n+21},m_{32n+22})\\
    =&(-1,1,0,-1,0,-2,-1,0,0,1,0,-1,-1).
\end{align*}
\begin{align*}
    &(m_{32n+8i+23},m_{32n+8i+24},m_{32n+8i+25},m_{32n+8i+26},m_{32n+8i+27},m_{32n+8i+28},m_{32n+8i+29},m_{32n+8i+30})\\=&(-2,-1,0,0,1,0,-1,-1)
\end{align*}
for $i=0,\ldots,n-1$.
\begin{align*}
    &(m_{40n+8i+23},m_{40n+8i+24},m_{40n+8i+25},m_{40n+8i+26},m_{40n+8i+27},m_{40n+8i+28},m_{40n+8i+29},m_{40n+8i+30})\\
    =&(-2,-3,-2,-2,-1,0,-1,-1)
\end{align*}
for $i=0,\ldots,n-1$.
\begin{align*}
    &(m_{48n+8i+23},m_{48n+8i+24},m_{48n+8i+25},m_{48n+8i+26},m_{48n+8i+27},m_{48n+8i+28},m_{48n+8i+29},m_{48n+8i+30})\\
    =&(-2,-3,-1,-2,-1,0,-2,-1)
\end{align*}
for $i=0,\ldots,n$.
$$a_{56n+31}=0.$$
\begin{align*}
    &(m_{56n+8i+32},m_{56n+8i+33},m_{56n+8i+34},m_{56n+8i+35},m_{56n+8i+36},m_{56n+8i+37},m_{56n+8i+38},m_{56n+8i+39})\\
    =&(-1,1,0,-1,0,-2,-1,0)
\end{align*}
for $i=0,\ldots,n-1$.

To find the absolute Maslov gradings, we take a step back to look at Figure \ref{fig:step8}. If we remove basepoint $z$, most of the crossing points can be easily reduced. In fact, only the last three crossing points might survive. By analyzing the twisting part of the $\alpha$ curve, we find that the crossing points $x_{64n+29}$ and $x_{64n+30}$ can be reduced from above. Therefore, the only survived crossing point $x_{64n+31}$ has absolute Maslov grading zero. So the relative Maslov gradings we got is also the absolute Maslov gradings.

The Poincare polynomial is derived from counting the number of crossing points with given Alexander and Maslov grading.
\end{proof}

\begin{corollary}
$K_n$ and $K_m$ are non-isomorphic if $m\neq n$.
\end{corollary}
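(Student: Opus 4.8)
The plan is to read this off directly from the previous theorem. Recall that the bigraded knot Floer homology $\widehat{\text{HFK}}_*(S^3,K)$, and hence its Poincar\'e polynomial $\text{HF}_K(q,t)=\sum_{m,a}q^m t^a\,\text{rank}_{\mathbf{Z}}\widehat{\text{HFK}}_m(S^3,K,a)$, is an invariant of the knot $K$ up to isomorphism: isomorphic knots have isomorphic bigraded knot Floer homology, and in particular the same Poincar\'e polynomial. So it suffices to extract from $\text{HF}_{K_n}(q,t)$ some quantity that depends on $n$.

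The simplest such quantity is the total rank $\dim_{\mathbf{Z}}\widehat{\text{HFK}}(S^3,K_n)=\text{HF}_{K_n}(1,1)$. Substituting $q=t=1$ into the formula of the previous theorem gives $\text{HF}_{K_n}(1,1)=-1+(2n+1)\cdot 2\cdot 2^4=32(2n+1)-1=64n+31$, which is strictly increasing in $n$. Hence if $m\neq n$ then $\dim_{\mathbf{Z}}\widehat{\text{HFK}}(S^3,K_n)\neq\dim_{\mathbf{Z}}\widehat{\text{HFK}}(S^3,K_m)$, so $\widehat{\text{HFK}}(S^3,K_n)\not\cong\widehat{\text{HFK}}(S^3,K_m)$, and therefore $K_n$ and $K_m$ are not isomorphic. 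If one prefers not to collapse the grading, one can instead note that the coefficient of $q^{-1}t^0$ in $\text{HF}_{K_n}(q,t)$ equals $-1+6(2n+1)=12n+5$, which again determines $n$; in particular the polynomials $\text{HF}_{K_n}(q,t)$ are pairwise distinct.

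I do not expect any genuine obstacle here: granting the previous theorem, the corollary is a formal consequence of the invariance of knot Floer homology. The only point worth emphasizing is that a sufficiently coarse invariant---the total dimension of $\widehat{\text{HFK}}$---already separates the entire family $\{K_n\}_{n\ge 0}$, so one need not compare the full bigraded isomorphism types, nor invoke the finer invariants (such as $\tau$) discussed elsewhere in the paper.
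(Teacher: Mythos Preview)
Your argument is correct and follows essentially the same approach as the paper: the paper's proof is the single sentence ``This is because they have different knot Floer homology,'' and you have simply spelled out explicitly (via the total rank $64n+31$, or alternatively via the coefficient $12n+5$ of $q^{-1}t^0$) why the Poincar\'e polynomials computed in the preceding theorem are pairwise distinct.
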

\begin{proof}
This is because they have different knot Floer homology.
\end{proof}
\begin{corollary}
The Conway polynomial of $K_n$ is $1$.
\end{corollary}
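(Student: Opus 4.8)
The plan is to extract the Alexander polynomial of $K_n$ from the knot Floer homology computed in the previous theorem, and then translate it into the Conway polynomial. Recall the theorem of Ozsv\'ath and Szab\'o that the graded Euler characteristic of knot Floer homology recovers the symmetrized Alexander polynomial: for any knot $K\subset S^3$,
$$\Delta_{K}(t)=\sum_{m,a\in\mathbf{Z}}(-1)^{m}\,t^{a}\,\text{rank}_{\mathbf{Z}}\bigl(\widehat{\text{HFK}}_m(S^3,K,a)\bigr),$$
with the symmetric normalization (for which $\Delta_K(1)=1$) making this an honest equality. In the notation of the preceding theorem, the right-hand side is precisely $\text{HF}_{K_n}(-1,t)$, so it suffices to evaluate the closed form at $q=-1$.

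First I would substitute $q=-1$ into $\text{HF}_{K_n}(q,t)=-q^{-1}+(2n+1)q^{-3}t^{-2}(1+q)(1+qt)^{4}$. The second summand carries the factor $(1+q)$, which vanishes at $q=-1$ and therefore contributes nothing for any $n$; the first summand becomes $-(-1)^{-1}=1$. Hence $\Delta_{K_n}(t)=1$ for every $n\ge 0$. Finally, passing from the Alexander polynomial to the Conway polynomial via the standard substitution $z=t^{1/2}-t^{-1/2}$, under which $\nabla_{K_n}(z)=\Delta_{K_n}(t)$, we conclude $\nabla_{K_n}(z)=1$.

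There is no real obstacle here: once the Euler-characteristic formula is invoked the argument is a one-line substitution, and the vanishing of the $(1+q)$ factor does all the work uniformly in $n$. The only point deserving a moment's care is the normalization — one should note that the gradings recorded in the previous theorem are the absolute (symmetric) ones, so that the resulting Euler characteristic is the symmetric Alexander polynomial rather than one defined only up to a unit $\pm t^{k}$; the computed value $\text{HF}_{K_n}(-1,1)=1$ is consistent with this and pins the normalization down. (As an aside, triviality of the Alexander, equivalently Conway, polynomial is exactly the input needed for Freedman's theorem to conclude that $K_n$ is topologically slice.)
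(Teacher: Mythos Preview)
Your proposal is correct and matches the paper's own proof: both evaluate the Poincar\'e polynomial at $q=-1$, using the Ozsv\'ath--Szab\'o identification of the graded Euler characteristic of $\widehat{\text{HFK}}$ with the (symmetrized) Alexander polynomial, and note that the factor $(1+q)$ kills the second summand. Your added remarks on normalization and the Alexander--Conway substitution are sound but go slightly beyond what the paper records.
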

\begin{proof}
The Conway polynomial is the graded Euler characteristic of the knot Floer homology, so we have
$$\Delta_{K_n}(t)=\text{HF}_{K_n}(-1,t)=1.$$
\end{proof}
\begin{corollary}
$K_n$ is topologically slice.
\end{corollary}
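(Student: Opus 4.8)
The plan is to deduce this immediately from Freedman's theorem on knots with trivial Alexander polynomial. The computation behind the previous corollary gives more than the vanishing of the Conway polynomial: since the graded Euler characteristic of $\widehat{\text{HFK}}(S^3,K_n)$ is the symmetrized Alexander polynomial, we in fact obtain $\Delta_{K_n}(t)\doteq 1$, i.e. $K_n$ has trivial Alexander polynomial. So the first step is simply to record that the formula $\Delta_{K_n}(t)=\text{HF}_{K_n}(-1,t)=1$ already exhibits triviality of $\Delta_{K_n}$, not merely of $\nabla_{K_n}$.

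I would then invoke Freedman's theorem (see Freedman--Quinn, \emph{Topology of 4-Manifolds}): every knot in $S^3$ whose Alexander polynomial is trivial bounds a locally flat embedded $2$-disk in $D^4$, hence is topologically slice. The substantive content is entirely inside this black box --- it rests on topological surgery theory and the disk embedding theorem in dimension four for the fundamental group $\mathbf{Z}$, which is a ``good'' group, the relevant algebraic input being precisely that the knot exterior has the homology of a circle when $\Delta=1$. Applying the theorem to each $K_n$ completes the argument.

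The only points requiring care are bookkeeping rather than mathematical: one should check that the normalization conventions line up so that ``$\text{HF}_{K_n}(-1,t)=1$'' genuinely yields the trivial Alexander polynomial (triviality up to units $\pm t^k$ would already suffice), and one should cite the precise form of Freedman's result rather than a weaker surgery-theoretic corollary. I do not expect any real obstacle here: given the knot Floer computation already carried out in the preceding section, topological sliceness of the family $K_n$ is essentially automatic, and this corollary is a one-line consequence.
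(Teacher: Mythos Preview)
Your proposal is correct and matches the paper's own proof: the paper simply observes that the previous corollary gives trivial Conway (equivalently, Alexander) polynomial and then cites Freedman's theorem (Freedman--Quinn, and also Garoufalidis--Teichner) to conclude topological sliceness. Your added remarks about normalization and the equivalence $\nabla_{K_n}=1 \Leftrightarrow \Delta_{K_n}\doteq 1$ are accurate but not needed beyond what the paper records.
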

\begin{proof}
Any knot with trivial Conway polynomial is topologically slice, as proved in \cite{fre,gar}.
\end{proof}

\section{The $\tau$ invariant and the smooth slice genus of $K_n$}\label{4}
\begin{lemma}
The invariant $\tau(K_0)$ is $1$.
\end{lemma}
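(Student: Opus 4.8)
\section*{Proof proposal for the lemma \texorpdfstring{$\tau(K_0)=1$}{tau(K0)=1}}

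The plan is to compute the filtered chain homotopy type of $CFK^\infty(S^3,K_0)$ explicitly and then read $\tau(K_0)$ off from its Alexander filtration. Since $K_0$ is $K(31,12,6,18)$, the algorithm of \cite{goda} presents $CFK^\infty(S^3,K_0)$ on the thirty-one generators $x_1,\dots,x_{31}$ already named, and the Maslov and Alexander gradings of all of them are known from the computation of $\widehat{\text{HFK}}(S^3,K_0)$. So the remaining task is to determine the boundary operator, i.e.\ the index-one domains on the genus-one Heegaard surface together with their multiplicities $n_w,n_z$ at the two basepoints.

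First I would collect the domains already exhibited in the proof of the knot Floer homology computation: specialized to $K_0$ these are the domains $x_i\to x_{25-i}$ for $1\le i\le 12$ carrying a single $z$, the domains $x_2\to x_1$, $x_5\to x_4$, $x_6\to x_7$, $x_{10}\to x_{15}$ carrying a single $z$, the domain $x_{31}\to x_{25}$ carrying two $z$'s, and their mirror images under the symmetry $x_i\leftrightarrow x_{32-i}$ of the diagram, which interchanges $w$ and $z$ and supplies the domains carrying $w$'s. I would then complete this to the full boundary operator using three constraints: (a) the associated graded differential vanishes, because $\widehat{\text{HFK}}(S^3,K_0)$ has total rank $31$, equal to the number of generators, so every nonzero component of $\partial$ strictly lowers the Alexander grading; (b) $\partial^2=0$, which already compels extra domains, for instance $x_{23}\to x_{24}$ (needed so that $\partial^2 x_2=0$ once $\partial x_2\ni x_1,x_{23}$ and $\partial x_1\ni x_{24}$); and (c) the fact that the complex $(\langle x_1,\dots,x_{31}\rangle,\partial^w)$, where $\partial^w$ counts index-one domains with $n_w=0$, must have homology $\widehat{\text{HF}}(S^3)\cong\mathbf Z$ concentrated in Maslov grading $0$, which fixes the rank of $\partial^w$ in each Maslov degree (from top to bottom $1,4,5,4,1$) and hence pins the remaining coefficients.

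With $\partial^w$ in hand, $\tau(K_0)$ follows from its definition as the least $s$ for which $H_*(\mathcal F(K_0,s))\to\widehat{\text{HF}}(S^3)$ is onto, $\mathcal F(K_0,s)$ being spanned by the generators of Alexander grading $\le s$. The generator of $\widehat{\text{HF}}(S^3)\cong\mathbf Z$ lives in Maslov grading $0$; the Maslov-$0$ generators $x_3,x_4,x_9,x_{12},x_{14},x_{17},x_{18},x_{20},x_{28},x_{31}$ have Alexander grading $0$ or $1$, while the Maslov-$1$ generators all have Alexander grading $\ge 1$, so $\mathcal F(K_0,0)$ has no Maslov-$1$ chains and $H_0(\mathcal F(K_0,0))$ is simply the space of $\partial^w$-cycles inside $\langle x_4,x_{12},x_{14},x_{18},x_{20},x_{28}\rangle$. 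Hence $\tau(K_0)\in\{0,1\}$, and $\tau(K_0)=1$ exactly when every such cycle is a boundary in $(\langle x_1,\dots,x_{31}\rangle,\partial^w)$; this is the one genuine linear-algebra check, and it is consistent with the observation already used that deleting $z$ collapses the complex onto the single generator $x_{31}$, which has Alexander grading $1$. Equivalently, one may change basis to write $CFK^\infty(S^3,K_0)$ as a staircase summand plus acyclic square (``box'') summands and note that the staircase is the one with $\tau=1$.

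The step I expect to be the real obstacle is the bookkeeping in the middle: assembling the complete boundary operator on thirty-one generators from the partial list of domains without error. Nothing here is conceptually deep — for $\tau$ one may work over $\mathbf F_2$ and disregard signs — but the computation is unforgiving, since a single spurious or missing arrow can change $\tau$. Throughout, the checks that the associated graded of $\partial^w$ reproduce the $\widehat{\text{HFK}}$ Poincaré polynomial, that $\partial^2=0$, and that $H_*(\partial^w)\cong\mathbf Z$ in Maslov grading $0$, together with the built-in symmetry $x_i\leftrightarrow x_{32-i}$, keep the answer pinned down.
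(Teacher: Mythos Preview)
Your plan is essentially the paper's approach: compute $\widehat{\mathrm{CF}}(S^3)$ from the $(1,1)$ diagram for $K_0=K(31,12,6,18)$ as a filtered complex on $x_1,\dots,x_{31}$, and then read $\tau$ off the Alexander filtration. Your grading analysis is correct, and the reduction to checking whether the $\partial^w$-cycles in $\langle x_4,x_{12},x_{14},x_{18},x_{20},x_{28}\rangle$ bound in the full complex is exactly what the paper establishes (there the generating cycle is $x_3+x_4+x_{28}+x_{31}\in\mathcal F(K_0,1)$, and a projection sending only $x_3\mapsto 1$ kills $\mathcal F(K_0,0)$).

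The one place your sketch diverges from the paper, and where it is genuinely weaker, is the middle step. The paper does not reconstruct $\partial$ from the partial list of domains together with the algebraic constraints $\partial^2=0$, rank counts, and symmetry; it simply enumerates all index-one Whitney disks on the diagram and writes down the full differential on the thirty-one generators. Your proposed indirect reconstruction is not guaranteed to succeed: the constraints you list are necessary but there is no argument that they pin down every arrow (and the putative symmetry $x_i\leftrightarrow x_{32-i}$, while plausible, is asserted rather than verified from the diagram). Since a single missing arrow of the form $x_j\to x_4$ or $x_j\to x_{18}$ with $j$ of Maslov grading $1$ could change whether the relevant cycles bound, this is precisely the step on which the value of $\tau$ hinges. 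In practice you will have to do what the paper does: go back to the diagram and count the disks directly, then use $\partial^2=0$ and the rank of $H_*(\partial^w)$ only as consistency checks.
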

\begin{proof}
To calculate the invariant $\tau(K_0)$, we first compute the chain complex $\text{CFK}^{\infty}(S^3,K_0)$. By definition, the generators of $\text{CFK}^{\infty}(S^3,K_0)$ is given by $[x_i,j,j+a_i]$ ($i=1,\ldots,64n+31, j\in \mathbf{Z}$), and the boundary operator on $\text{CFK}^{\infty}(S^3,K_0)$ is given by
\begin{align*}
\partial [x_1,i+1,i]&=[x_{7},i-1,i]-[x_{10},i,i]+[x_{21},i,i-1]-[x_{24},i+1,i-1],\\
\partial [x_2,i+1,i+1]&=[x_{1},i+1,i]-[x_{3},i,i+1]-[x_{9},i,i+1]-[x_{23},i+1,i],\\
\partial [x_3,i,i+1]&=[x_{7},i-1,i]-[x_{8},i-1,i+1]+[x_{21},i,i-1]-[x_{22},i,i],\\
\partial [x_4,i,i]&=-[x_{7},i-1,i]-[x_{21},i,i-1],\\
\partial [x_5,i,i+1]&=[x_{4},i,i]-[x_{6},i-1,i+1]-[x_{20},i,i],\\
\partial [x_6,i-1,i+1]&=-[x_{7},i-1,i]-[x_{19},i-1,i],\\
\partial [x_7,i-1,i]&=-[x_{18},i-1,i-1],\\
\partial [x_8,i-1,i+1]&=-[x_{17},i-1,i],\\
\partial [x_9,i,i+1]&=[x_{8},i-1,i+1]-[x_{10},i,i]-[x_{16},i,i],\\
\partial [x_{10},i,i]&=-[x_{15},i,i-1],\\
\partial [x_{11},i-1,i]&=-[x_{14},i-1,i-1],\\
\partial [x_{12},i,i]&=[x_{11},i-1,i]-[x_{13},i,i-1],\\
\partial [x_{13},i,i-1]&=-[x_{14},i-1,i-1],\\
\partial [x_{14},i-1,i-1]&=0,\\
\partial [x_{15},i,i-1]&=0,\\
\partial [x_{16},i,i]&=[x_{15},i,i-1]-[x_{17},i-1,i],\\
\partial [x_{17},i-1,i]&=0,\\
\partial [x_{18},i-1,i-1]&=0,\\
\partial [x_{19},i-1,i]&=[x_{18},i-1,i-1],\\
\partial [x_{20},i,i]&=[x_{19},i-1,i]-[x_{21},i,i-1],\\
\partial [x_{21},i,i-1]&=[x_{18},i-1,i-1],\\
\partial [x_{22},i,i]&=[x_{17},i-1,i],\\
\partial [x_{23},i+1,i]&=[x_{16},i,i]+[x_{22},i,i]-[x_{24},i+1,i-1],\\
\partial [x_{24},i+1,i-1]&=[x_{15},i,i-1],\\
\partial [x_{25},i,i-1]&=[x_{14},i-1,i-1],\\
\partial [x_{26},i+1,i-1]&=[x_{13},i,i-1]+[x_{25},i,i-1],\\
\partial [x_{27},i+1,i]&=[x_{12},i,i]+[x_{26},i+1,i-1]-[x_{28},i,i],\\
\partial [x_{28},i,i]&=[x_{11},i-1,i]+[x_{25},i,i-1],\\
\partial [x_{29},i+1,i]&=[x_{10},i,i]-[x_{11},i-1,i]+[x_{24},i+1,i-1]-[x_{25},i,i-1],\\
\partial [x_{30},i+1,i+1]&=[x_{9},i,i+1]+[x_{23},i+1,i]+[x_{29},i+1,i]-[x_{31},i,i+1],\\
\partial [x_{31},i,i+1]&=[x_{8},i-1,i+1]-[x_{11},i-1,i]+[x_{22},i,i]-[x_{25},i,i-1],
\end{align*}
by counting all Whitney disks of unit Maslov index, where the signs are by an orientation of the $\beta$ curve.

By definition, the chain complex $\widehat{\text{CF}}(S^3)$ is generated by $[x_i,0,a_i]$ ($i=1,\ldots, 64n+31$), and the boundary operator on $\widehat{\text{CF}}(S^3)$ is given by

\begin{align*}
\partial [x_1,0,-1]&=-[x_{24},0,-2],\\
\partial [x_2,0,0]&=[x_{1},0,-1]-[x_{23},0,-1],\\
\partial [x_3,0,1]&=[x_{21},0,-1]-[x_{22},0,0],\\
\partial [x_4,0,0]&=-[x_{21},0,-1],\\
\partial [x_5,0,1]&=[x_{4},0,0]-[x_{20},0,0],\\
\partial [x_6,0,2]&=-[x_{7},0,1]-[x_{19},0,1],\\
\partial [x_7,0,1]&=-[x_{18},0,0],\\
\partial [x_8,0,2]&=-[x_{17},0,1],\\
\partial [x_9,0,1]&=-[x_{10},0,0]-[x_{16},0,0],\\
\partial [x_{10},0,0]&=-[x_{15},0,-1],\\
\partial [x_{11},0,1]&=-[x_{14},0,0],\\
\partial [x_{12},0,0]&=-[x_{13},0,-1],\\
\partial [x_{13},0,-1]&=0,\\
\partial [x_{14},0,0]&=0,\\
\partial [x_{15},0,-1]&=0,\\
\partial [x_{16},0,0]&=[x_{15},0,-1],\\
\partial [x_{17},0,1]&=0,\\
\partial [x_{18},0,0]&=0,\\
\partial [x_{19},0,1]&=[x_{18},0,0],\\
\partial [x_{20},0,0]&=-[x_{21},0,-1],\\
\partial [x_{21},0,-1]&=0,\\
\partial [x_{22},0,0]&=0,\\
\partial [x_{23},0,-1]&=-[x_{24},0,-2],\\
\partial [x_{24},0,-2]&=0,\\
\partial [x_{25},0,-1]&=0,\\
\partial [x_{26},0,-2]&=0,\\
\partial [x_{27},0,-1]&=[x_{26},0,-2],\\
\partial [x_{28},0,0]&=[x_{25},0,-1],\\
\partial [x_{29},0,-1]&=[x_{24},0,-2],\\
\partial [x_{30},0,0]&=[x_{23},0,-1]+[x_{29},0,-1],\\
\partial [x_{31},0,1]&=[x_{22},0,0]-[x_{25},0,-1].
\end{align*}
The homology of is generated by the cycle $[x_3,0,1]+[x_4,0,0]+[x_{28},0,0]+[x_{31},0,1]$. Since $[x_3,0,1]+[x_4,0,0]+[x_{28},0,0]+[x_{31},0,1]\in \mathcal{F}(K_0,1)$, we have $\tau(K_0)\le 1$.

Consider the projection $p:\widehat{\text{CF}}(S^3)\to\mathbf{Z}$ that sends all generators to zero except for $p([x_3,0,1])=1$. Then $p$ is a chain map which induces isomorphism on homology. Since $p$ maps all elements in $\mathcal{F}(K_0,0)$ to zero, we have $\tau(K_0)\ge 1$.

Therefore the invariant of $\tau(K_0)$ is $1$.
\end{proof}
\begin{theorem}
The $\tau$ invariant and the smooth slice genus of $K_n$ are $1$.
\end{theorem}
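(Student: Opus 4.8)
The plan is to deduce the statement for all $n$ from the single computation $\tau(K_0)=1$ (already established in the preceding lemma) together with the general inequalities for $\tau$ under crossing changes proved in \cite{ozs}. Recall that if $K_+$ and $K_-$ differ by changing one positive crossing to a negative crossing, then $\tau(K_-)\le\tau(K_+)\le\tau(K_-)+1$. Examining the planar projection in Figure \ref{fig:k_n}, the knots $K_n$ are obtained from one another by inserting full left-hand twists in the region bounded by the white circle; a single full left-hand twist on two parallel strands is realized by two crossing changes of the same sign. Concretely, $K_n$ is obtained from $K_{n-1}$ by a sequence of negative-to-positive (or positive-to-negative, depending on orientation conventions, to be fixed from the figure) crossing changes, so that the sequence $\tau(K_0),\tau(K_1),\ldots$ is monotone and changes by a bounded amount at each step. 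The first step of the write-up is therefore to read off from Figure \ref{fig:k_n} exactly which crossing changes convert $K_{n-1}$ into $K_n$ and record the resulting one-sided inequality between $\tau(K_{n-1})$ and $\tau(K_n)$.

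Next I would pin down the correct direction and magnitude of the inequality using the knot Floer homology already computed in Section \ref{3}. The Poincaré polynomial $\text{HF}_{K_n}(q,t)=-q^{-1}+(2n+1)q^{-3}t^{-2}(1+q)(1+qt)^4$ shows that the top Alexander grading with nonzero $\widehat{\text{HFK}}$ is $a=2$ for every $n\ge 1$ (and $a=1$ for $n=0$, consistent with the lemma since the genus bounds $\tau$). Since $|\tau(K)|\le g_4(K)$ and, more sharply, $\tau(K)$ is bounded above by the Seifert genus, which is bounded above by the maximal Alexander grading, we get $\tau(K_n)\le 2$. Combining this with the monotonicity from the twist moves and the anchor value $\tau(K_0)=1$, the sequence $\tau(K_n)$ is trapped: it is non-decreasing (or non-increasing) from $1$ and cannot exceed $2$ nor, by the symmetric genus bound applied to the mirror or by the lower range of the crossing-change inequality, drop below $1$. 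To force the value to be exactly $1$ for all $n$ rather than eventually $2$, I would use the more refined structure: the $\delta$-thin part of $\text{HF}_{K_n}$ is independent of $n$ near the top grading, and the crossing changes in question are of a type for which the unit-Maslov-index disk count controlling $\tau$ is unaffected — equivalently, one checks that the generator $[x_3,0,1]+[x_4,0,0]+[x_{28},0,0]+[x_{31},0,1]$ representing the generator of $\widehat{\text{HF}}(S^3)$ survives with the same filtration level $1$ in $\text{CFK}^\infty(S^3,K_n)$, because the part of the chain complex in low index $i$ (the generators $x_1,\ldots,x_{31}$ with their differentials) is literally the same for all $n$: the $n$-dependence of the complex lives entirely in the $8i$-indexed blocks of generators with higher index, which do not interact with this cycle.

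Once $\tau(K_n)=1$ is established, the smooth slice genus follows immediately: the inequality $|\tau(K)|\le g_4(K)$ from \cite{ozs} gives $g_4(K_n)\ge 1$, so $K_n$ is not smoothly slice; and the planar projection exhibits $K_n$ as obtainable from the unknot by a single crossing change (unknotting number one, visible by changing one crossing in the twist region of Figure \ref{fig:k_n}), which bounds $g_4(K_n)\le 1$. Hence $g_4(K_n)=1$. Finally, $K_n$ is topologically slice by the corollary in Section \ref{3}, so these are the desired examples, proving the Main Theorem.

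The main obstacle I anticipate is the middle step: ruling out $\tau(K_n)=2$ for large $n$. The soft bounds (genus bound above, crossing-change bound) only sandwich $\tau(K_n)$ between $1$ and $2$, and the one-sided crossing-change inequality alone does not decide which. The real work is to verify that the portion of $\text{CFK}^\infty(S^3,K_n)$ relevant to the $\tau$ computation — the filtered chain homotopy type near Alexander grading $0$ and the surviving generator of $\widehat{\text{HF}}(S^3)$ — is genuinely $n$-independent, so that the argument of the $K_0$ lemma (the explicit surviving cycle in $\mathcal{F}(K_n,1)$ together with the filtered projection $p$ detecting level $1$) transports verbatim. This requires either exhibiting the relevant sub/quotient complex of $\text{CFK}^\infty(S^3,K_n)$ explicitly using the grading data tabulated in Section \ref{3}, or invoking a stabilization/naturality argument showing the twist region contributes only acyclic summands to the filtered complex in the gradings that matter.
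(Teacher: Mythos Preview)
Your lower bound is exactly right and matches the paper: the twist region contains $2n$ positive crossings, and changing $n$ of them to negative converts $K_n$ to $K_0$, so the crossing-change inequality from \cite{ozs} gives $\tau(K_n)\ge\tau(K_0)=1$.

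The gap is in how you get the upper bound. You propose to rule out $\tau(K_n)=2$ by arguing that the part of $\text{CFK}^\infty(S^3,K_n)$ relevant to $\tau$ is $n$-independent, so that the explicit $K_0$ computation ``transports verbatim.'' This is neither necessary nor, as stated, correct: the indexing of generators in Section~\ref{3} shifts with $n$ (the blocks are parametrized by $8i+j$ with $i$ ranging up to $n-1$, $2n$, etc.), so the generators you call $x_1,\ldots,x_{31}$ do not sit in the same position in the complex for $K_n$, and the claimed ``literally the same'' subcomplex would require real work to extract. More importantly, you never need it. You have the logic inverted: rather than proving $\tau(K_n)=1$ directly and then deducing $g_4(K_n)=1$, the paper first bounds $g_4(K_n)\le 1$ geometrically and then uses $|\tau|\le g_4$ to squeeze $\tau(K_n)$ between $1$ and $1$.

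Your geometric upper bound is also wrong as stated. You assert $K_n$ has unknotting number one ``visible by changing one crossing in the twist region,'' but changing a crossing in the twist region only moves you toward $K_0$, which is not the unknot (it has $\tau=1$). What the paper actually does is \emph{resolve} (not change) two crossings---one in the twist region and one specific crossing labeled $a$ elsewhere in the diagram---and then checks via an explicit isotopy that the result is the unknot. Two oriented resolutions give a genus-one cobordism from $K_n$ to the unknot, hence $g_4(K_n)\le 1$. With that in hand, $1=\tau(K_0)\le\tau(K_n)\le g_4(K_n)\le 1$ finishes the proof in one line; no analysis of $\text{CFK}^\infty(S^3,K_n)$ for $n\ge 1$ is required.
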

\begin{proof}
After changing $n$ of the $2n$ positive crossings in white circle in Figure \ref{fig:k_n} to negative crossings, the new knot becomes $K_0$, therefore by \cite{ozs} we have $g_4(K_n)\ge \tau(K_n)\ge \tau(K_0)=1$.

By resolving one of the $2n$ positive crossings in white circle and the crossing $a$ in Figure \ref{fig:k_n}, we get a new knot as follows.

\begin{figure}[H]
\centering
\includegraphics[width=0.4\textwidth]{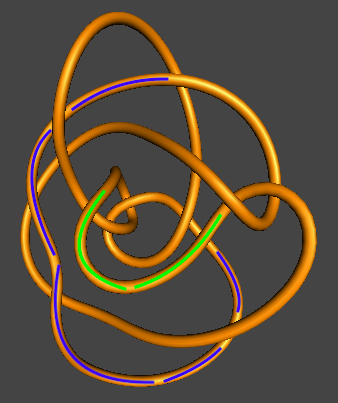}
\caption{\label{fig:k_new}The knot obtained by resolving two crossings in $K_n$.}
\end{figure}

By straightening the blue part and the green part in Figure \ref{fig:k_new}, we get the following isotopic knot.

\begin{figure}[H]
\centering
\includegraphics[width=0.4\textwidth]{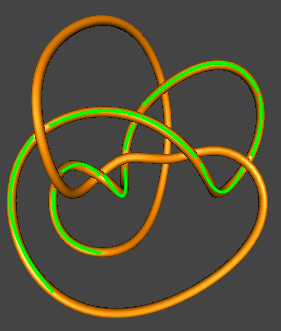}
\caption{\label{fig:k_new2}The knot obtained by resolving two crossings in $K_n$.}
\end{figure}

By straightening the green part in Figure \ref{fig:k_new2}, it is now clear that our new knot is the unknot.

Hence, by resolving two crossings, we can construct a torus cobordism (a split cobordism followed by a merge cobordism) between $K_n$ and the unknot. By definition, we have $g_4(K_n)\le 1$. 

Therefore, we have $\tau(K_n)=g_4(K_n)=1$.
\end{proof}
\begin{theorem}
There are infinitely many $(1,1)$-knots which are topologically slice, but not smoothly slice.
\end{theorem}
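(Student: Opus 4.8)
The plan is to exhibit the explicit one-parameter family $\{K_n\}_{n\geq 0}$ constructed in Section~\ref{2} and to check that it simultaneously realizes the three required properties --- being a $(1,1)$-knot, being topologically slice, and failing to be smoothly slice --- together with the fact that it contains infinitely many distinct knot types. So the proof will just be an assembly of the theorems and corollaries proved in the preceding sections.

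First I would invoke the identification in Section~\ref{2}: each $K_n$ is the $(1,1)$-knot $K(64n+31,\,24n+12,\,16n+6,\,32n+18)$ in Rasmussen's notation, so every member of the family is genuinely a $(1,1)$-knot. Next, to guarantee that infinitely many distinct knots actually occur, I would appeal to the knot Floer homology computation of Section~\ref{3}: the Poincar\'e polynomial $\text{HF}_{K_n}(q,t)=-q^{-1}+(2n+1)q^{-3}t^{-2}(1+q)(1+qt)^4$ has total rank $64n+31$, which is strictly increasing in $n$, so $K_m$ and $K_n$ have non-isomorphic knot Floer homology --- hence are distinct knots --- whenever $m\neq n$.

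Then I would use the corollary of Section~\ref{3} that the Conway polynomial $\Delta_{K_n}(t)=\text{HF}_{K_n}(-1,t)=1$ is trivial, so by the cited results \cite{fre,gar} each $K_n$ bounds a locally flat disk in $D^4$ and is therefore topologically slice. Finally, the theorem of Section~\ref{4} gives $\tau(K_n)=g_4(K_n)=1>0$, so no $K_n$ is smoothly slice. Assembling these four facts, $\{K_n\}_{n\geq 0}$ is an infinite collection of pairwise distinct $(1,1)$-knots, each topologically but not smoothly slice, which is exactly the assertion.

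Once Sections~\ref{2}--\ref{4} are granted there is no serious obstacle remaining; the only step that even requires a word of justification is that the family is genuinely infinite, and the monotonicity of the total rank $64n+31$ of $\widehat{\text{HFK}}$ settles that at once. All the real difficulty --- deriving the $(1,1)$-diagrams, reading off $CFK^\infty(S^3,K_n)$ from them, and pinning down $\tau$ via the induced map on $\widehat{\text{CF}}(S^3)$ --- has already been carried out in the preceding sections, so the final theorem is essentially a corollary of the work done above.
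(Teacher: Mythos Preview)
Your proposal is correct and follows exactly the paper's own approach: the theorem is simply the assembly of the results already established in Sections~\ref{2}--\ref{4}, and the paper's proof likewise just cites the family $K_n$ as the desired example. Your write-up is more explicit than the paper's one-line proof, but the logic is identical.
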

\begin{proof}
The family of knots $K_n$ serves as an example.
\end{proof}

\bibliographystyle{alpha}

\begin{thebibliography}{9}
\bibitem{doll} Helmut Doll. "A generalized bridge number for links in 3-manifolds." Mathematische Annalen 294.1 (1992): 701-717.
\bibitem{fre}
Michael H. Freedman, and Frank Quinn. Topology of 4-manifolds. Princeton Univ. Press, 1990.
\bibitem{gar}
Stavros Garoufalidis, and Peter Teichner. "On knots with trivial Alexander polynomial." Journal of Differential Geometry 67.1 (2004): 167-193.
\bibitem{goda} Hiroshi Goda, Hiroshi Matsuda, and Takayuki Morifuji. "Knot Floer homology of (1, 1)-knots." Geometriae Dedicata 112.1 (2005): 197-214.
\bibitem{gom} 
Robert E Gompf. "Smooth concordance of topologically slice knots." Topology 25.3 (1986): 353-373.
\bibitem{ozs} Peter Ozsváth, and Zoltán Szabó. "Knot Floer homology and the four-ball genus." Geometry \& Topology 7.2 (2003): 615-639.
\bibitem{rac} Béla András Rácz. "Geometry of (1, 1)-Knots and Knot Floer Homology." (2015).
\bibitem{ras} Jacob Rasmussen. "Knot polynomials and knot homologies." Geometry and topology of manifolds 47 (2005): 261-280.
\bibitem{sch}Robert Scharein, Knotplot, \url{https://knotplot.com/}
\end{thebibliography}

\end{document}